\newcommand{\CP}{\mathbb{CP}}
\newcommand{\bCP}{\overline{\mathbb{CP}}}
\newcommand{\Kahler}{K\"{a}hler~}
\newcommand{\TryPackage}[3]{\IfFileExists{#1.sty}{\usepackage{#1}#2}{#3}
}
\newcommand{\ZZ}{{\mathbb Z}}
\newcommand{\RR}{{\mathbb R}}
\newtheorem{df}{Definition}
\newtheorem{thm}[df]{Theorem}
\newtheorem{lem}[df]{Lemma}
\newtheorem{prop}[df]{Proposition}
\begin{document}

\title[Coisotropic Luttinger surgery]{Coisotropic Luttinger surgery and some new symplectic 6-manifolds with vanishing canonical class}

\author{Scott Baldridge}
\author{Paul Kirk}
\date{May 9, 2011}

\thanks{The first   author  gratefully acknowledges support from the
NSF  grant DMS-0748636. The second author gratefully acknowledges support from the
NSF  grant DMS-1007196}

\address{Department of Mathematics, Louisiana State University \newline
\hspace*{.375in} Baton Rouge, LA 70817} 
\email{\rm{sbaldrid@math.lsu.edu}}

\address{Department of Mathematics, Indiana University \newline
\hspace*{.375in} Bloomington, IN 47405} 
\email{\rm{pkirk@indiana.edu}}

\subjclass[2000]{Primary 57R17; Secondary 57M05, 54D05} 
\keywords{Calabi-Yau manifold, symplectic topology, Luttinger surgery}

\maketitle

\begin{abstract} We  introduce a  surgery operation on symplectic manifolds called coisotropic Luttinger surgery, which generalizes 
 Luttinger surgery on Lagrangian tori in symplectic 4-manifolds  \cite{Lut}.  We use it to produce infinitely many distinct  symplectic non-\Kahler 
 6-manifolds $X$ with $c_1(X)=0$   which are not of the form $M\times F$ for $M$ a symplectic 4-manifold and $F$ a closed surface. 
\end{abstract} 


\section{Introduction}

\medskip

In this article we  introduce a  surgery operation on symplectic manifolds called co\-isotropic Luttinger surgery, which generalizes 
 Luttinger surgery on Lagrangian tori in symplectic 4-manifolds  \cite{Lut, ADK}.  We use it to produce infinitely many distinct  symplectic non-\Kahler 
 6-manifolds $X$ with $c_1(X)=0$   which are not of the form $M\times F$ for $M$ a symplectic 4-manifold and $F$ a closed surface.
 
 \bigskip

\begin{thm}\label{main}  Coisotropic surgery on 4-tori in $T^6$ produces  an infinite family of pairwise non-homotopy equivalent  
 closed symplectic $6$-manifolds  $X_{n}$ with $c_1(X_{n})=0$, Euler characteristic $\chi(X_n)=0$, and  Betti numbers satisfying $b_1(X_n)=3$, $b_2(X_n)\leq 18, $ and $b_3(X_n)\leq 32$.  None of the  manifolds $X_{n}$ are symplectomorphic to $M\times F$ for a symplectic 4-manifold $M$ and surface $F$.\end{thm}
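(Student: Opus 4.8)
The plan is to realize each $X_n$ as the result of finitely many coisotropic Luttinger surgeries on linear $4$-subtori of $T^6=\mathbb{R}^6/\mathbb{Z}^6$, equipped with its standard flat symplectic form (for which $c_1(T^6)=0$). First I would select a collection of coisotropic $4$-tori whose characteristic foliations are linear $2$-tori, fix a framing of each, and define $X_n$ by surgering along them with twist parameters controlled by the integer $n$. That each $X_n$ is symplectic and satisfies $c_1(X_n)=0$ I would read off from the general properties of the surgery established in the preceding sections: the regluing is effected by a bundle automorphism supported near the characteristic foliation which preserves a trivialization of the canonical bundle, so the triviality of $K$ descends from $T^6$ to $X_n$. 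Since the surgery removes and reinserts copies of $T^4\times D^2$ glued along $T^4\times S^1$, and all of $T^6,\ T^4\times D^2,\ T^4\times S^1$ have vanishing Euler characteristic, the cut-and-paste leaves $\chi$ unchanged, giving $\chi(X_n)=0$.

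Next I would pin down the low-degree homology. Seifert--van Kampen applied to the decomposition $X_n=(T^6\setminus\nu)\cup\nu$ yields a presentation of $\pi_1(X_n)$ in which the six commuting generators of $\pi_1(T^6)$ pick up the surgery relations; abelianizing should give $H_1(X_n)\cong\mathbb{Z}^3\oplus T_n$, the free rank $3$ recording the directions untouched by the surgeries and the torsion subgroup $T_n$ recording the twisting. A companion Mayer--Vietoris computation controls $b_2$: beginning from $b_2(T^6)=15$ and noting that each surgery perturbs $H_2$ by a bounded amount, one obtains $b_2(X_n)\le 18$. The final Betti estimate is then automatic from Poincar\'e duality: for a closed oriented $6$-manifold $\chi=2b_0-2b_1+2b_2-b_3$, so $0=2-6+2b_2-b_3$ forces $b_3=2b_2-4\le 32$.

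The distinctness of the family is where I expect the genuine work to lie, and I would extract it from $H_1$. Because the free rank is fixed at $3$, the manifolds must be separated by torsion: if the surgery data are arranged so that the groups $T_n$ are pairwise non-isomorphic (for instance cyclic of order $n$), then $H_1(X_n)\not\cong H_1(X_m)$ for $n\ne m$, and as $H_1$ is a homotopy invariant the $X_n$ are pairwise non-homotopy equivalent. The delicate point is confirming that the chosen surgeries realize the intended torsion while neither dropping $b_1$ below $3$ nor pushing $b_2$ above $18$; this is a bookkeeping problem in the surgery presentation, but it is the step most likely to demand care.

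Finally, to exclude products I would argue entirely through $c_1$ and $b_1$. A symplectomorphism $X_n\cong (M,\omega_M)\times(F,\omega_F)$, with $M$ a symplectic $4$-manifold and $F$ a closed surface, gives $0=c_1(X_n)=\pi_M^*c_1(M)+\pi_F^*c_1(F)$; since the two summands lie in complementary K\"unneth factors of $H^2(M\times F)$ they vanish separately, and $c_1(F)=0$ means $\chi(F)=0$, i.e.\ $F=T^2$ and $M$ a symplectic Calabi--Yau surface. Then $3=b_1(X_n)=b_1(M)+b_1(T^2)$ forces $b_1(M)=1$. But a symplectic $4$-manifold with torsion canonical class has \emph{even} first Betti number (the possible values being $0,2,4$, by the classification of symplectic $4$-manifolds of Kodaira dimension zero), so $b_1(M)=1$ cannot occur. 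This contradiction finishes the proof, and it should be the most transparent step, since it defers the geometry to a known classification.
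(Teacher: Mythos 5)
Your proposal follows the paper's architecture for most of the statement---surgeries along coisotropic $4$-tori in $T^6$, triviality of the canonical class via a trivialization preserved by the regluing, $\chi=0$ by cut-and-paste, $H_1\cong\mathbb{Z}^3\oplus\mathbb{Z}/n$ to separate the manifolds, and Poincar\'e duality plus $\chi=0$ for $b_3\leq 32$---so the real comparison is the product-exclusion step, where you genuinely diverge and where your argument contains a false assertion. It is \emph{not} true that a symplectic $4$-manifold with torsion canonical class has even first Betti number: the Kodaira--Thurston manifold is a symplectic $T^2$-bundle over $T^2$ with $c_1=0$ and $b_1=3$. The correct statement (due to T.~J.~Li and Bauer, proved with Seiberg--Witten theory) is that $b_1\in\{0,2,3,4\}$, which still excludes $b_1(M)=1$, so your step can be repaired by citing the theorem accurately. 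But note what each route costs. The paper avoids the classification entirely and argues elementarily: $c_1(M)=0$ makes $M$ spin, so Rohlin's theorem gives $\sigma(M)\equiv 0\pmod{16}$; combining this with $0=c_1(M)^2=2\chi(M)+3\sigma(M)$, with $b_1(M)=1$ (forced by K\"unneth), and with $b^+(M)\geq 1$ (symplectic) forces $b_2(M)\geq 22$, hence $b_2(X)\geq 23$, contradicting $b_2(X)\leq 18$. This explains why the bound $b_2\leq 18$ appears in the statement of the theorem at all: in the paper it is a load-bearing ingredient of the product exclusion, whereas in your route it becomes a claim you must verify only because the theorem asserts it.

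That points to the second issue: your justification of $b_2(X_n)\leq 18$---that each surgery perturbs $H_2$ by a ``bounded amount''---is not an argument and in particular does not produce the constant $18$. The paper obtains it from a genuine computation: $H_2(T^6\setminus N)\cong\mathbb{Z}^{17}$, which rests on showing that $H_3(T^6)\to H_3(T^6,T^6\setminus N)\cong\mathbb{Z}^{16}$ has rank exactly $10$ (by exhibiting ten coordinate $3$-tori and computing their transverse intersections with the four surgery tori), followed by handle-counting which gives $b_2(X_{k,\tau})\leq 15+b_1(X_{k,\tau})$. You would need this computation, or an equivalent one, in any case---and if you ever fell back on the paper's product argument, it would be essential. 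Finally, a caveat on your $c_1=0$ step: the paper explicitly warns that coisotropic Luttinger surgery does \emph{not} preserve vanishing of $c_1$ in general, so it cannot be ``read off from general properties of the surgery.'' What makes it work here is that the standard trivializing $(3,0)$-form on $T^6$ pulls back under each embedding $e_i$ to the coordinate section $(dx+i\,dz)(dw+i\,dy)(d\sigma_1+i\,d\sigma_2)$, and the paper verifies this compatibility with an explicit almost complex structure and patching argument; your phrase ``preserves a trivialization of the canonical bundle'' names the right condition but elides exactly the verification the paper is careful about.
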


\bigskip

Coisotropic Luttinger surgery has a very simple topological description which generalizes Dehn surgery in dimension 3.  It is localized near a certain codimension two coisotropic submanifold.   This makes it useful as method to produce related families of symplectic manifolds. In Theorem \ref{main}  the 4-tori on which the surgeries are performed are not symplectic, but rather products of Lagrangian with symplectic tori.

Symplectic   manifolds $M$ with vanishing  first Chern class are known as {\em symplectic Calabi-Yau manifolds} \cite{STY, FP1}.   The famous Kodaira-Thurston 4-manifold \cite{Th} provided the first non-K\"ahler example of such a manifold, and one can produce higher dimensional examples by taking its product with a torus. 

Symplectic Calabi-Yau manifolds which do not admit Kahler structures have received attention recently (cf.~\cite{FP1, FP2,  TW2, TW1, W}).  
In dimension 6 these manifolds were introduced  by Smith, Thomas, and Yau in their paper on symplectic conifold transitions  \cite{STY} motivated by their use in producing pairs exhibiting mirror symmetry.   Their construction  involves  symplectically resolving singular complex projective 3-folds.  Other known examples   include certain   nilmanifolds (e.g.~\cite{CLG}) (of which the Kodaira-Thurston manifold is an example), and the compelling constructions of Fine-Panov \cite{FP1,FP2} which are obtained from $S^2$ bundles over 4-dimensional hyperbolic orbifolds and $S^3$ bundles over hyperbolic 3-manifolds.

\medskip
 
The examples we produce   reconstruct a few of these previously known examples,  although in this article  we take our seed manifold to be $T^6$ and hence we do not produce simply connected examples. But the essential property that we exploit is   that $T^6$ fibers in different ways.

 \medskip
In addition to its use in constructing the examples of Theorem \ref{main}, coisotropic Luttinger surgery applies in a wide range of contexts in symplectic topology. In particular, it extends to higher dimensions a codimension two symplectic surgery operation which, in concert with the symplectic sum operation \cite{Gompf},  has already had significant impact in 4-dimensional smooth topology c.f.~\cite{BK2}.     Moreover, coisotropic  surgery is localized near a 
 submanifold and so one can understand the change in homotopy  invariants by standard Mayer-Vietoris arguments.
We expect the process to have interesting applications outside the context of symplectic Calabi-Yau manifolds.  We touch on some potential further applications in the last section.

\section{Coisotropic Luttinger surgery}

 We describe the construction, which consists of removing $D^2\times T^2\times \Sigma$ from a symplectic $2n$-manifold and regluing by an appropriate diffeomorphism of the boundary.

  Let $ D^2_\epsilon $ denote the closed disk in $\RR^2$ of radius $\epsilon$ and coordinates $x, y$, hence 1-forms $dx, dy$.  Let $T^2=S^1\times S^1=\RR^2/\ZZ^2$ denote the 2-torus  with coordinates $e^{iz}, e^{iw}$ and its global 1-forms $dz,dw$ (descended from $\RR^2$).

Suppose we are given  a $2n-4$ dimensional manifold $\Sigma$  
and    a family $ \omega_{\Sigma;(d,t)} $   of symplectic forms on $\Sigma$ parametrized by $(d,t)\in D^2_\epsilon\times T^2$.  Then 
 $D^2_\epsilon \times T^2\times \Sigma$ inherits the symplectic form 
 \begin{equation}
\label{omega}\omega=dx~dz + dw~dy +  \omega_{\Sigma;(d,t)}.
\end{equation}
The parallel submanifolds  $\{(x,y)\}\times T^2\times\Sigma,~(x,y)\in D^2_\epsilon$ are cosiotropic with respect to $\omega$.

We extend  Luttinger surgery  \cite{Lut} as follows.  
Suppose that $(X,\omega_X)$ is a symplectic $2n$-manifold and $\Sigma$ is a closed $2n-4$-dimensional smooth manifold. 
Suppose one is given an embedding 
$$e:D^2_\epsilon \times T^2\times \Sigma \hookrightarrow X$$
so that the pulled back symplectic form $e^*(\omega_X)$ satisfies
$$e^*(\omega_X) = \omega 
$$
for $\omega$ the form defined in Equation (\ref{omega}).

Fix an integer $k$   and let $$\phi_k:  (D^2_{\epsilon}\setminus D^2_{2\epsilon/3})\times T^2\times \Sigma\to 
 (D^2_{\epsilon}\setminus D^2_{2\epsilon/3})\times T^2\times \Sigma$$
 denote the diffeomorphism given in polar coordinates on $D^2_{\epsilon}\setminus D^2_{5\epsilon/6}$  by
 $$\phi_k(re^{i\theta}, e^{iz}, e^{iw},\sigma)=(re^{i\theta}, e^{iz}, e^{i(w+ k\theta)},\sigma).$$

\begin{lem} \label{extends} The symplectic form $\phi_k^*(\omega)$ extends to a symplectic form on $  D^2_{\epsilon} \times T^2\times \Sigma$
\end{lem}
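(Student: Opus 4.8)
The plan is to produce the extension by hand as a compactly supported modification of $\omega$ and to check closedness and nondegeneracy directly. First I would compute $\phi_k^*\omega$ on the region $r\ge 5\epsilon/6$ where $\phi_k$ is the genuine twist. Since $\phi_k$ fixes $x,y,z$ and every $\Sigma$-coordinate and sends $w\mapsto w+k\theta$, we get $\phi_k^*dw=dw+k\,d\theta$ with $\phi_k^*dx=dx$, $\phi_k^*dz=dz$, and $\phi_k^*\omega_{\Sigma;(x,y,z,w)}=\omega_{\Sigma;(x,y,z,w+k\theta)}$. Closedness of $\omega$ forces $d\omega_{\Sigma}=0$, and since the four pieces $dx\wedge\partial_x\omega_\Sigma,\dots,dw\wedge\partial_w\omega_\Sigma$ are of distinct type they must each vanish; hence the vertical family is independent of the base coordinates and $\phi_k^*\omega_\Sigma=\omega_\Sigma$. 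Therefore on the twist region
\[ \phi_k^*\omega=\omega+k\,d\theta\wedge dy. \]

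The extra term is the entire difficulty, since it prevents one from simply filling the core with the standard $\omega$. I would choose a smooth cutoff $f(r)$ with $f\equiv 1$ for $r\ge 2\epsilon/3$ and $f\equiv 0$ for $r$ near $0$, take $\phi_k$ to be the twist on the whole annulus $D^2_\epsilon\setminus D^2_{2\epsilon/3}$ (a genuine diffeomorphism there because $k\in\ZZ$), and set
\[ \widetilde\omega=\omega+k\,f(r)\,d\theta\wedge dy. \]
By construction $\widetilde\omega=\phi_k^*\omega$ on the whole annulus, while the cutoff brings $\widetilde\omega$ back to $\omega$ before reaching the origin, so $\widetilde\omega$ is a smooth form on all of $D^2_\epsilon\times T^2\times\Sigma$. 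For closedness, rewrite $d\theta\wedge dy=-\tfrac{y}{r^2}\,dx\wedge dy$, so that $k\,f(r)\,d\theta\wedge dy$ is a top-degree form in the disk variables $x,y$ and is automatically closed; combined with $d\omega=0$ this gives $d\widetilde\omega=0$.

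The step that genuinely needs the explicit shape of the extra term is nondegeneracy, where one might a priori fear that the interpolating forms degenerate for intermediate $r$. Writing $\widetilde\omega=A+\omega_\Sigma$ with $A=dx\wedge dz+dw\wedge dy+k f\,d\theta\wedge dy$, the only term of top degree in $\widetilde\omega^{\wedge n}$ is $\binom{n}{2}A^{\wedge 2}\wedge\omega_\Sigma^{\wedge(n-2)}$. In $A^{\wedge 2}$ the two cross terms containing $d\theta\wedge dy$ vanish, because $(dx\wedge dz)\wedge d\theta\wedge dy$ repeats $dx$ and $(dw\wedge dy)\wedge d\theta\wedge dy$ repeats $dy$; hence $A^{\wedge 2}=2\,dx\wedge dz\wedge dw\wedge dy$ independently of $f$, and $\widetilde\omega^{\wedge n}$ is the same nonvanishing multiple of the volume form as $\omega^{\wedge n}$. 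Thus $\widetilde\omega$ is symplectic everywhere and is the desired extension.

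I expect the main obstacle to be conceptual rather than computational. The twist induces the nontrivial automorphism $\left(\begin{smallmatrix}1&0\\k&1\end{smallmatrix}\right)$ of $H_1(T^2)$, so no diffeomorphism of the annulus can equal the twist on its outer boundary and the identity on its inner boundary; consequently the extension cannot be realized by filling the core after a change of coordinates, and one is forced to alter the symplectic form itself. The role of the specific closed term $k\,f(r)\,d\theta\wedge dy$ is exactly that its lower-triangular shape leaves the top exterior power unchanged, so nondegeneracy survives the whole interpolation. The only point beyond these computations is compatibility with the vertical family $\omega_{\Sigma;(d,t)}$, which closedness of $\omega$ reduces to the base-independent situation used above.
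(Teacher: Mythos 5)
Your proposal is correct and follows essentially the same route as the paper: pull back $\omega$ by the twist to get $\omega + k\,d\theta\wedge dy = \omega - \tfrac{ky}{x^2+y^2}\,dx\wedge dy$, interpolate the correction term away with a radial cutoff (your $k\,f(r)\,d\theta\wedge dy$ is exactly the paper's $-kyf\,dx\wedge dy$, with the paper's $f$ equal to your $f(r)/r^2$), note the correction is closed because it is pulled back from a top-degree form on the disk, and verify nondegeneracy because the correction drops out of the top exterior power. If anything, your write-up is a bit more careful than the paper's on two points it glosses over: the base-independence of the family $\omega_{\Sigma;(d,t)}$ forced by closedness of $\omega$, and the explicit bookkeeping showing that only the term $\binom{n}{2}A^{\wedge 2}\wedge\omega_\Sigma^{\wedge(n-2)}$ survives in $\widetilde\omega^{\wedge n}$.
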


\begin{proof}

One computes 
$$ (\phi_k)_*(\tfrac{\partial}{\partial r})=\tfrac{\partial}{\partial r},
\  (\phi_k)_*(\tfrac{\partial}{\partial \theta})=\tfrac{\partial}{\partial \theta}+k  \tfrac{\partial}{\partial w}, 
\  (\phi_k)_*(\tfrac{\partial}{\partial z})=\tfrac{\partial}{\partial z},
 \  (\phi_k)_*(\tfrac{\partial}{\partial w})=\tfrac{\partial}{\partial w}.$$
Hence
\begin{equation*}\label{forms}
\phi_k^*(dr)=dr, ~\phi_k^*(d\theta)=d\theta, ~\phi_k^*(dz)=dz,~ \text{and} ~\phi_k^*(dw)=dw+   kd\theta.
\end{equation*}
Switching back to  Cartesian coordinates on $D^2_\epsilon$ yields 
\begin{equation}\label{forms1}
\phi_k^*(dx)=dx, ~\phi_k^*(dy)=dy, ~\phi_k^*(dz)=dz,~ \text{and} ~\phi_k^*(dw)=dw+  \tfrac{k}{  x^2+y^2 }(xdy-ydx).
\end{equation}
Hence 
$$\phi_k^*(\omega)=\omega - \tfrac{k}{ x^2+y^2 }~y~dx~dy.$$

Fix a radially symmetric smooth function $f:D^2_\epsilon\to \RR$  which equals  $0$ for $\sqrt{x^2+y^2}\leq   \tfrac{ \epsilon }{3}$ and $  \tfrac{1}{ x^2+y^2 }$   for $\sqrt{x^2+y^2}\ge\tfrac{2\epsilon}{3}$.     Then the 2-form 
$$\alpha=-ky f(x,y)~dxdy$$  on $D^2_\epsilon\times T^2\times \Sigma$
  is closed since it is pulled back from a 2-form on $D^2_\epsilon$. 
 Thus 
 \begin{equation}\label{symplectic}
 \tilde\omega= \omega + \alpha= \omega- kyf~dxdy
 \end{equation} is closed.  It agrees with $\phi_k^*(\omega)$ on $(D^2_{\epsilon}\setminus D^2_{2\epsilon/3})\times T^2\times \Sigma$ and agrees with $\omega$ on $ D^2_{\epsilon/3} \times T^2\times \Sigma$. On $(D^2_{2\epsilon/3}\setminus D^2_{\epsilon/3})\times T^2\times \Sigma,$ $\alpha^2=0$  and $\omega \wedge \alpha=\omega_\Sigma\wedge \tau$ 
and so
$\tilde\omega^n=\omega^n + \omega_\Sigma^{n-1}\alpha=\omega^n$   
and hence $\tilde \omega$ is non-degenerate, i.e.~ a symplectic form.
\end{proof}

Construct a new manifold $X'$ as the union with identifications
$$X'=  \big( (X\setminus e(D_{2\epsilon/3}\times T^2\times \Sigma))\sqcup \big( D^2_\epsilon\times T^2\times \Sigma)\big)/\sim$$
where the points 
  $e(re^{i\theta}, e^{iz},e^{iw},\sigma)\in X$ and $ \phi_k(re^{i\theta}, e^{iz},e^{iw},\sigma)\in D^2_\epsilon\times T^2\times \Sigma $ are identified provided $  \tfrac{2\epsilon}{3}\leq r\leq \epsilon.$   Lemma \ref{extends} shows that the symplectic form on $X\setminus e(D_{2\epsilon/3}\times T^2\times \Sigma)$ extends to a symplectic form on $X'$.

Since this construction depends on an coisotropic submanifold instead of a Lagrangian submanifold, we  say $X'$ is obtained from $X$ by $\frac{1}{k}$  {\em coisotropic Luttinger surgery  along $T^2\times\Sigma\subset X$}.  If  $k=0$ then clearly $X'=X$.

\bigskip

As a smooth manifold, $X'$ can be described as the manifold obtained by removing $D^2\times T^2\times \Sigma$ from $X$ and regluing using the restriction of $\phi_k$ to the boundary:
\begin{equation}\label{psi}
\psi_k:S^1\times T^2\times \Sigma\to S^1\times T^2\times \Sigma, \ 
\psi_k(e^{i  \theta}, e^{iz}, e^{iw},\sigma)=(e^{i  \theta}, e^{iz}, e^{i(w+k \theta)},\sigma).\end{equation}

The   following proposition is  well known in the case when $n=2$, that is, for Luttinger surgery on 4-manifolds.  

\begin{prop} If $X'$ is obtained from $X$ by $\frac{1}{k}$ coisotropic Luttinger surgery along $T^2\times\Sigma\subset  X$, then the
Euler characteristic is unchanged, 
$\chi(X')=\chi(X).$   When $\dim(X)=4\ell$,   the signature  is unchanged, $\sigma(X')=\sigma(X)$.  The fundamental group of $X'$ is the quotient of $\pi_1(X\setminus (T^2\times \Sigma))$ by the normal subgroup generated by the circle $\psi_k(\partial D^2_\epsilon \times \{p\})$

\end{prop}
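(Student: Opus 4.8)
The plan is to handle the three assertions separately, disposing of the Euler characteristic and signature by additivity and reserving the real work for the fundamental group. Write $X = A \cup_N B$ and $X' = A \cup_N B'$, where $A = X \setminus e(\operatorname{int} D^2_{2\epsilon/3} \times T^2 \times \Sigma)$ is the common complement, $B \cong B' \cong D^2_\epsilon \times T^2 \times \Sigma$ are the glued-in pieces, and $N = S^1 \times T^2 \times \Sigma$ is the separating hypersurface; the only difference between $X$ and $X'$ is the gluing diffeomorphism of $N$. Additivity of the Euler characteristic gives $\chi(X) = \chi(A) + \chi(B) - \chi(N)$ and the identical formula for $X'$, and since $A$, $N$, and the diffeomorphism type of $B \cong B'$ are unchanged, $\chi(X') = \chi(X)$. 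When $\dim X = 4\ell$, Novikov additivity gives $\sigma(X) = \sigma(A) + \sigma(B)$ and $\sigma(X') = \sigma(A) + \sigma(B')$ independently of the gluing map, and $\sigma(B) = \sigma(B')$ forces $\sigma(X') = \sigma(X)$.

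For the fundamental group I would apply the Seifert--van Kampen theorem to $X' = A \cup B'$ (thickened to open sets). Here $A$ deformation retracts onto $C := X \setminus (T^2 \times \Sigma)$, the piece $B' = D^2_\epsilon \times T^2 \times \Sigma$ retracts onto $T^2 \times \Sigma$, and the overlap retracts onto $N$, so $\pi_1(X')$ is presented as the pushout of $\pi_1(C) \xleftarrow{i_A} \pi_1(N) \xrightarrow{i_{B'}} \pi_1(T^2 \times \Sigma)$. The map $i_{B'}$ is the projection killing the meridian $\mu = [\partial D^2_\epsilon \times \{p\}]$ and is the identity on $\pi_1(T^2 \times \Sigma)$, hence surjective with kernel the normal closure of $\mu$. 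Since $\mu$ sits in the central $\ZZ^3$ factor of $\pi_1(N) = \ZZ \times \ZZ^2 \times \pi_1(\Sigma)$, that kernel is just $\langle \mu \rangle \cong \ZZ$. A pushout $G \xleftarrow{i_A} K \to H$ in which $K \to H$ is surjective with kernel $L$ collapses to $G / \langle\langle i_A(L) \rangle\rangle$, so $\pi_1(X') = \pi_1(C) / \langle\langle i_A(\mu) \rangle\rangle$.

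The one delicate point, and the step I expect to be the main obstacle, is to identify $i_A(\mu)$ with the class of the curve $\psi_k(\partial D^2_\epsilon \times \{p\})$ named in the statement; this is pure bookkeeping of the regluing. The capping disk $D^2_\epsilon \times \{p\} \subset B'$ is attached to $A$ through the identification $e(q) \sim \phi_k(q)$ defining $X'$, so the loop that becomes null-homotopic in $X'$, read off in $\pi_1(C)$, is the $\psi_k$-twisted meridian rather than the untwisted one: tracing $\theta \mapsto (e^{i\theta}, e^{iz_0}, e^{i(w_0 + k\theta)}, \sigma_0)$ records one turn in the meridian direction together with $k$ turns in the $w$-direction. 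Substituting this class for $i_A(\mu)$ yields $\pi_1(X') = \pi_1(X \setminus (T^2 \times \Sigma)) / \langle\langle \psi_k(\partial D^2_\epsilon \times \{p\}) \rangle\rangle$, as claimed. By contrast the pushout algebra and the two additivity statements are entirely routine.
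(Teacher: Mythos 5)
Your proposal is correct, and while the Euler characteristic and signature parts coincide with the paper's proof (Mayer--Vietoris/additivity and Novikov additivity, respectively), your fundamental group computation takes a genuinely different route. The paper's argument is handle-theoretic: it gives $T^2\times\Sigma$ a handle decomposition with a single top-dimensional handle, crosses with $D^2$, and turns the resulting decomposition of $D^2\times T^2\times\Sigma$ upside down, so that $X'$ is exhibited as $X\setminus(D^2_\epsilon\times T^2\times\Sigma)$ with a single $2$-handle attached along the twisted meridian followed by handles of index at least $3$; since those higher handles cannot change $\pi_1$, Seifert--Van Kampen gives the quotient by the normal closure of the attaching circle. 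You instead feed the entire reglued piece $D^2_\epsilon\times T^2\times\Sigma$ into Seifert--van Kampen and collapse the pushout algebraically, using that $\pi_1(S^1\times T^2\times\Sigma)\to\pi_1(D^2\times T^2\times\Sigma)$ is surjective with kernel the central direct factor $\langle\mu\rangle\cong\ZZ$, so the pushout of $\pi_1(C)\leftarrow\pi_1(N)\to\pi_1(T^2\times\Sigma)$ is $\pi_1(C)/\langle\langle i_A(\mu)\rangle\rangle$; your bookkeeping identifying $i_A(\mu)$ with the class of $\psi_k(\partial D^2_\epsilon\times\{p\})$ matches the paper's (both share the harmless $\psi_k$ versus $\psi_k^{-1}=\psi_{-k}$ ambiguity in reading the direction of the gluing, which is immaterial since $k$ ranges over all integers and normal closures are insensitive to inversion). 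The trade-off: your argument is more elementary and self-contained for $\pi_1$ alone, and it makes transparent exactly why only the meridian enters (it generates a kernel that is already normal, being a central direct factor); the paper's handle decomposition does double duty, since the same upside-down decomposition, with its count of $2$-, $3$-, and higher handles, is reused in Proposition \ref{rels} to compute $H_1(X_{k,\tau})$ and to bound $b_2$ and $b_3$, which a pure van Kampen argument does not provide.
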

\begin{proof}
Using the Mayer-Vietoris sequence one sees that the Euler characteristic is unchanged, 
$\chi(X')=\chi(X).$
When $n$ is even Novikov additivity shows that the signature  is unchanged, $\sigma(X')=\sigma(X)$.  

Give $ T^2\times\Sigma$ a handlebody structure with handles of index $0,1,\cdots, 2n-3$ and a single $2n-2$-handle. The product   $D^2 \times T^2\times\Sigma$  has a handlebody structure obtained by taking the product of $D^2 $ with each handle, and in particular has a single $2n-2$-handle.    Turning the handle decomposition upside down shows that $D^2 \times T^2\times\Sigma$ is obtained from $X \setminus(D^2_\epsilon\times T^2\times \Sigma)$ by attaching a single 2-handle along the attaching circle $\psi_k(\partial D^2_\epsilon  \times (1,1,\sigma_0))$, and then adding handles of  index greater than 2.

The Seifert-Van Kampen theorem implies that 
$$\pi_1(X')=\pi_1( X \setminus(D^2_\epsilon\times T^2\times \Sigma))/N\big(\psi_k(\partial D^2_\epsilon \times \{p\})\big).$$
\end{proof}

Calculating $\pi_1(X\setminus T^2\times\Sigma)$ in terms of   $\pi_1(X)$ and the embedding $T^2\times \Sigma\subset X$  can be a challenge, since $T^2\times \Sigma$ has codimension two in $X$. In our main application below we will content  ourselves with the easier task of computing $H_1(X\setminus T^2\times\Sigma)$ and then $H_1(X')$.

\section{Producing  symplectic 6-manifolds with $c_1=0$}

Consider $X=T^6=T^2\times T^2\times T^2$, the 6-torus. Endow $X$ with the symplectic form $\omega_X=dx_1dy_1+dx_2dy_2+dx_3dy_3$. We can find four  disjoint embeddings of $T^2\times T^2$ in $X$ with the properties we need to apply the construction of the previous section:
\begin{equation}\label{embeds}
\begin{split} e_1(e^{i z}, e^{iw},  \sigma_1,\sigma_2)=  (1, e^{iz}, e^{iw},1,   \sigma_1,\sigma_2) \\
 e_2(e^{i z}, e^{iw},  \sigma_1,\sigma_2)=  (i, e^{iz}, 1,  e^{iw}, \sigma_1,\sigma_2) \\
 e_3(e^{i z}, e^{iw},  \sigma_1,\sigma_2)=  (-1, e^{iz},    \sigma_1,\sigma_2,  e^{iw}, 1) \\
 e_4(e^{i z}, e^{iw},  \sigma_1,\sigma_2)=  (-i, e^{iz},    \sigma_1,\sigma_2,  1, e^{iw}). 
\end{split}
\end{equation}
 
These are disjoint since their first coordinates are different. Note that $e_i(T^2\times (\sigma_1,\sigma_2))$ is isotropic and $e_i((r,s)\times T^2)$ is symplectic.

For $\epsilon>0$ small, extend $e_i$ to $D^2_\epsilon\times T^2\times T^2$ by 
 \begin{equation}
\label{e1}\begin{split}
e_1(x,y,e^{iz},e^{iw},e^{is_1},e^{is_2})&=(e^{ix}, e^{iz}, e^{iw}, e^{iy},e^{is_1},e^{is_2}),\\
e_2(x,y,e^{iz},e^{iw},e^{is_1},e^{is_2})&=(ie^{ix}, e^{iz}, e^{-iy},e^{iw}, e^{is_1},e^{is_2}),\\
e_3(x,y,e^{iz},e^{iw},e^{is_1},e^{is_2})&=(-e^{ix}, e^{iz}, e^{is_1},e^{is_2}, e^{iw},e^{iy}),\\
e_4(x,y,e^{iz},e^{iw},e^{is_1},e^{is_2})&=(-ie^{ix}, e^{iz}, e^{is_1},e^{is_2}, e^{-iy},e^{iw}).
\end{split}\end{equation}
Then $$e_i^*(dx_1dy_1+dx_2dy_2+dx_3dy_3)=dx~dz+dw~dy +d\sigma_1~d\sigma_2=\omega$$ for each $i$.

One can find many more embeddings of $T^2\times T^2$ by precomposing $e_i$ by a diffeomorphism $\tau:T^2\times T^2\to T^2\times T^2$ of the form 
\begin{equation} \label{tau}
\tau(e^{i z}, e^{iw},  \sigma_1,\sigma_2)=(e^{i (pz+qw)}, e^{i(rz+sw)},  \sigma_1,\sigma_2)
\end{equation}
for integers $p,q,r,s$ satisfying $ps-qr=1$. 
Identify $\tau$ with the corresponding matrix in $SL(2,\ZZ)$. Precomposing   $e_i$ by $\tau\in SL(2,\ZZ)$ and extending over $D^2_\epsilon\times T^2\times T^2$ yields another embedding with $(e_i\circ\tau)^*(dx_1dy_1+dx_2dy_2+dx_3dy_3)=\omega$, since $\det\tau=1$.

 \bigskip

Choose a surgery  parameter $k_i$ and a matrix $\tau_i\in SL(2,Z)$   for each embedding  $e_i$. Applying the coisotropic surgery procedure to $T^6$ yields a family of 6-dimensional symplectic manifolds $X_{k, \tau}$, indexed by  $(k,\tau)=(k_1,k_2,k_3,k_4;\tau_1,\tau_2,\tau_3,\tau_4)$ in the infinite set $ \ZZ^4\times(SL(2,\ZZ))^4$.  These are not all symplectically distinct; for example $SL(2,\ZZ)^3$ (and even $Sp(6,\ZZ)$) acts on this collection via its action on $T^6=T^2\times T^2\times T^2$.  But there are infinitely many distinct manifolds in this family.  The following theorem is our main result, which immediately implies Theorem \ref{main}   promised in the introduction, by taking $d_1=0,d_2=n, d_3=1$ and $d_4=1$.

\bigskip

\begin{thm}\label{result} For $(k,\tau)\in \ZZ^4\times (SL(2,\ZZ))^4$, the closed symplectic manifolds $X_{k,\tau}$ satisfy
 $c_1(X_{k,\tau})=0$.
The first homology  $H_1(X_{k,\tau})$ is the quotient of $\ZZ^6=\langle x_1,x_2,x_3,x_4,x_5,x_6\rangle$ by the subgroup generated by 
$$k_1(q_1x_2+s_1x_3), k_2(q_2x_2+s_2x_4), k_3(q_3x_2+s_3x_5),k_4(q_4x_2+s_4x_6).$$ 
Hence any abelian group of the form $$\ZZ^2\oplus\ZZ/d_1\oplus\ZZ/d_2\oplus \ZZ/d_3\oplus\ZZ/d_4$$  with $d_1,d_2,d_3,d_4$ non-negative integers can be realized as $H_1(X_{k,\tau})$ for an appropriate $(k,\tau)$.   

  If $b_1(X_{k,\tau})$ is odd then $X_{k,\tau}$ admits no K\"ahler structure.  If $b_1(X_{k,\tau})\leq 3$ then $X_{k,\tau}$ is not symplectomorphic to the product  $M\times F$ of a symplectic 4-manifold and a surface.   Finally, $b_2(X_{k,\tau})\leq 15+ b_1(X_{k,\tau})$ and 
  $b_3(X_{k,\tau})\leq 32$.
\end{thm}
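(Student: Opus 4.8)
The plan is to establish the assertions in turn, using the Proposition above, which already supplies $\chi(X_{k,\tau})=0$ and presents the fundamental group as a quotient.

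\emph{Vanishing of $c_1$.} I would produce a nowhere-vanishing section of the canonical bundle. On the complement of the surgery tori the symplectic form is the flat form $\omega_X$, and the flat complex structure with $z_j=x_j+\imaginary y_j$ carries the parallel holomorphic volume form $\Omega=dz_1\wedge dz_2\wedge dz_3$, a nonvanishing section of the canonical bundle there. The only point to check is that $\Omega$ extends over each reglued $D^2_\epsilon\times T^2\times T^2$. Using the explicit derivative of $\phi_{k}$ computed in the proof of Lemma \ref{extends} together with $\det\tau_i=1$, the gluing acts on the complex frame by a determinant-one (unipotent) shear, so $\Omega$ extends to a nowhere-vanishing section and $c_1(X_{k,\tau})=0$; this is the higher-dimensional analogue of the fact that Luttinger surgery preserves the canonical class.

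\emph{Computation of $H_1$ and its realizations.} Abelianizing the Proposition, $H_1(X_{k,\tau})=H_1(W)/\langle c_1,\dots,c_4\rangle$, where $W=T^6\setminus\bigsqcup_i N_i$ with $N_i=e_i(T^2\times T^2)$, and $c_i$ is the class of the $i$-th surgery circle. First I would show each meridian is null-homologous and $H_1(W)\cong\ZZ^6$: in the Thom--Gysin sequence of $(T^6,W)$ the map $H_2(T^6)\to\bigoplus_i H_0(N_i)$ is onto because each coordinate $4$-torus $N_i$ admits a dual $2$-torus meeting it once and the others not at all. Next, tracing $\psi_{k_i}(\partial D^2_\epsilon\times\{p\})$ through $e_i\circ\tau_i$: the disk factor contributes only the (null) meridian, while $\psi_{k_i}$ makes it wind $k_i$ times in the $w$-direction, which $\tau_i$ distributes as $q_i$ copies of $x_2$ and $s_i$ copies of $x_{i+2}$, so $c_i=k_i(q_ix_2+s_ix_{i+2})$, giving the stated presentation. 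Taking $\tau_i=\Id$ and $k_i=d_i$ makes the relations $d_ix_{i+2}$ and realizes $\ZZ^2\oplus\bigoplus_i\ZZ/d_i$. Since a compact K\"ahler manifold has even first Betti number, any $X_{k,\tau}$ with odd $b_1$ admits no K\"ahler structure.

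\emph{The product obstruction.} Suppose $X_{k,\tau}$ is symplectomorphic to $M\times F$. Restricting $c_1=0$ to $\{m\}\times F$ gives $c_1(F)=0$, so $\chi(F)=0$ and $F=T^2$; restricting to $M\times\{f\}$ gives $c_1(M)=0$, so $M$ is a symplectic $4$-manifold with integrally vanishing canonical class, hence minimal, and $b_1(X_{k,\tau})=b_1(M)+2$. By the classification of symplectic $4$-manifolds with torsion canonical class, $b_1(M)\in\{0,2,3,4\}$, whence $b_1(X_{k,\tau})\in\{2,4,5,6\}$; in particular $b_1(X_{k,\tau})=3$ cannot occur. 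The only remaining case with $b_1(X_{k,\tau})\le 3$ is $b_1(X_{k,\tau})=2$, forcing $b_1(M)=0$; since $c_1(M)=0$ integrally this excludes the Enriques case, so $M$ is a homology K3 and $b_3(M\times T^2)=2b_2(M)=44$, contradicting the bound $b_3(X_{k,\tau})\le 32$ established below.

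\emph{The Betti bounds.} Because $\chi(X_{k,\tau})=0$ and $X_{k,\tau}$ is a closed oriented $6$-manifold, Poincar\'e duality gives $b_3=2(1-b_1+b_2)$, so it suffices to prove $b_2\le 15+b_1$, and then $b_3\le 32$ follows. I would run the long exact sequence of the pair $(X_{k,\tau},W)$. By excision and the Thom isomorphism, $H_*(X_{k,\tau},W)\cong\bigoplus_i H_{*-2}(T^4)$ independently of all the gluing data, and the sequence yields $b_2(X_{k,\tau})-b_1(X_{k,\tau})=(b_2(W)-b_1(W)+4)-\operatorname{rank}\partial_3\le b_2(W)-b_1(W)+4$. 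The crux is then the single computation $b_1(W)=6$ and $b_2(W)=17$; both follow from the Thom--Gysin sequence for the four tori, the one nontrivial input being that the restriction $H^3(T^6)\to\bigoplus_i H^3(N_i)$ has rank $10$ (the ten $3$-forms omitting $dx_1$ map to linearly independent, disjointly supported classes, while those containing $dx_1$ die). This gives $b_2-b_1\le 11+4=15$ uniformly in $(k,\tau)$. The main obstacle throughout is this bookkeeping: the four tori all share $x_1$ as a normal direction and $x_2$ as a tangent direction, so keeping the Gysin and connecting maps straight is where the real care lies.
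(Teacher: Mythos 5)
Your treatment of $c_1(X_{k,\tau})=0$ has a genuine gap, and it sits exactly where the real content of the paper's argument lies. You assert that because the gluing acts on frames by a determinant-one shear, the flat volume form $\Omega$ ``extends to a nowhere-vanishing section.'' Two things are missing. First, the pullback of the coordinate section under $\phi_k$ involves the term $dw+\tfrac{k}{x^2+y^2}(x\,dy-y\,dx)$, which blows up at the core of the surgery region; it does not extend at all until one interpolates, replacing $\tfrac{1}{x^2+y^2}$ by the cutoff $f$ of Lemma \ref{extends}. Second, and more seriously, a nowhere-zero complex $3$-form says nothing about $c_1$ unless it has type $(3,0)$ for an almost complex structure compatible with the \emph{surgered} symplectic form $\tilde\omega$; this is why the paper's Theorem \ref{chern} must construct the non-flat structure $J_k$ explicitly (note the complicated formula for $J_k(dw)$), check $J_k^2=-1$ and compatibility with $\omega_k$, and verify $\phi_k^*(s_0)=s_k$ so that the local data patch. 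That this step cannot be waved through is underlined by the paper's remark after Theorem \ref{chern}: coisotropic Luttinger surgery does \emph{not} in general preserve $c_1=0$ (the canonical divisor can acquire rim $4$-cycles), so the vanishing depends on the specific matching of sections, not merely on the shear being unipotent.

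The rest of your proposal is essentially sound. The $H_1$ computation, the realization of the groups, the K\"ahler conclusion, and the Betti bounds track the paper: you need $b_1(W)=6$ and $b_2(W)=17$, i.e.\ the content of Lemma \ref{homology}, and your long-exact-sequence-of-the-pair packaging of $b_2(X)-b_1(X)\le 15$ is equivalent to the paper's handle count (one caveat: your rank-$10$ classes are \emph{not} disjointly supported --- the classes dual to the paper's $W_5,\dots,W_{10}$ each hit two of the tori --- though linear independence still holds, which is the paper's ``straightforward check''). Your product obstruction, however, is genuinely different: where the paper argues elementarily (spin $\Rightarrow$ Rohlin, $0=c_1(M)^2=2\chi(M)+3\sigma(M)$, and $b^+(M)\ge 1$ force $b_2(M)\ge 22$, contradicting $b_2(X)\le 18$), you invoke the Li--Bauer classification of symplectic $4$-manifolds with torsion canonical class to exclude $b_1(M)=1$ and to force a homology K3 when $b_1(M)=0$. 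This works and is shorter, but it rests on a much deeper theorem, and your one-line exclusion of the rational homology Enriques case (``since $c_1(M)=0$ integrally'') is itself a gap: the classification constrains only the rational homology type, so to rule that case out you still need Rohlin's theorem ($c_1(M)=0$ makes $M$ spin, while a rational homology Enriques has $\sigma=-8$) --- precisely the tool that lets the paper dispense with the classification altogether.
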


The proof will take up the remainder of this section, and follows   from Theorem \ref{chern}, Proposition \ref{rels}, and Theorem \ref{product}. 

\bigskip

 We begin with the calculation of the first Chern class.

\begin{thm}\label{chern} The symplectic 6-manifolds $X_{\tau,k}$ satisfy $c_1(X_{k,\tau})=0$.
\end{thm}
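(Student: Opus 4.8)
The plan is to produce a global nowhere-vanishing section of the canonical bundle $K=\Lambda^{3,0}T^*X_{k,\tau}$ of a compatible almost complex structure. Since $c_1(X_{k,\tau})=-c_1(K)$, trivializing $K$ forces $c_1(X_{k,\tau})=0$. On $T^6=\CC^3/\Lambda$ with its standard complex structure ($z_j=x_j+\imaginary y_j$) the holomorphic form $\Omega=dz_1\wedge dz_2\wedge dz_3$ is a nowhere-vanishing section of $K$, and it restricts to a trivialization over the complement $A=T^6\setminus(\text{surgery tubes})$, where the almost complex structure and the symplectic form are untouched. The whole problem is therefore to extend this section across each glued-in piece $B=D^2_\epsilon\times T^2\times T^2$.

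First I would record, using the embeddings (\ref{e1}), that $e_1^*\Omega$ has the standard shape $(dx+\imaginary\,dz)\wedge(dw+\imaginary\,dy)\wedge(ds_1+\imaginary\,ds_2)$, the other three embeddings giving the same thing up to a constant unit factor coming from the signs in $e_2,e_4$. Because $\phi_k$ fixes $dx,dy,dz$ and sends $dw\mapsto dw+k\,d\theta$ by (\ref{forms1}), the two descriptions of the section on the overlap differ by
\[
\phi_k^*(e_1^*\Omega)-e_1^*\Omega=k\,(dx+\imaginary\,dz)\wedge d\theta\wedge(ds_1+\imaginary\,ds_2),
\]
whose only singularity is the factor $d\theta=(x\,dy-y\,dx)/(x^2+y^2)$. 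This is exactly the situation of Lemma \ref{extends}: using the very same radial cutoff $f$, I set
\[
\tilde\Omega=e_1^*\Omega+k\,(dx+\imaginary\,dz)\wedge f(x,y)(x\,dy-y\,dx)\wedge(ds_1+\imaginary\,ds_2),
\]
which is smooth on all of $B$, equals $\phi_k^*(e_1^*\Omega)$ on the outer annulus $r\ge 2\epsilon/3$, and equals $e_1^*\Omega$ on $r\le\epsilon/3$.

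A short computation of $\tilde\Omega\wedge\overline{\tilde\Omega}$ shows that the extra term (which only adds a multiple of $dx\wedge dy$ to the middle factor) never destroys non-degeneracy, so $\tilde\Omega$ is a decomposable, nowhere-zero $(3,0)$-form; it therefore trivializes the canonical bundle of the almost complex structure $J_B$ whose $(1,0)$-forms are its three factors. On the outer annulus $J_B=\phi_k^*J_0$ and near the core $J_B=J_0$, so $\tilde\Omega$ matches $e_1^*\Omega$ under the gluing. Performing this on each of the four disjoint pieces $B_1,\dots,B_4$ and using $\Omega$ on the common complement $A$ patches to a global nowhere-vanishing section of $K_{X_{k,\tau}}$. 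The precomposing matrices $\tau_i\in SL(2,\ZZ)$ do not affect any of this: they preserve $\omega$ because $\det\tau_i=1$, and they leave the correction term in the same $k\,d\theta\wedge(\cdots)$ shape that the cutoff $f$ smooths. Hence $K_{X_{k,\tau}}$ is trivial and $c_1(X_{k,\tau})=0$.

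The hard part is not the algebra but the bookkeeping needed to identify the $c_1$ of this explicit trivialization with the symplectic first Chern class: one must check that $J_B$, or a structure homotopic to it rel the two ends of the transition annulus, is $\tilde\omega$-compatible, so that the patched structure lies in the contractible space of $\omega_{X_{k,\tau}}$-compatible almost complex structures and its $c_1$ is the invariant one. This reduces to confirming non-degeneracy of $\tilde\Omega$ uniformly across $\epsilon/3\le r\le 2\epsilon/3$ together with the fact that the interpolation stays inside the tamed (hence compatible) structures; both follow from the explicit form of the correction and the contractibility of the space of compatible structures, exactly as the analogous non-degeneracy check closes the proof of Lemma \ref{extends}.
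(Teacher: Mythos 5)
Your proposal is correct and takes essentially the same route as the paper: your corrected form $\tilde\Omega=(dx+i\,dz)\wedge\bigl(dw+i\,dy+kf\,(x\,dy-y\,dx)\bigr)\wedge(d\sigma_1+i\,d\sigma_2)$ expands to exactly the paper's section $s_k$, built with the same cutoff $f$, patched with the standard trivialization of the canonical bundle of $T^6$ over the complement of the tubes, and with the $\tau_i$ handled identically by substituting $\tau_i^*(dz),\tau_i^*(dw)$. The only difference is expository: the paper writes down the compatible almost complex structure $J_k$ first and derives $s_k$ from it (so compatibility with $\tilde\omega$ is checked directly rather than deferred, as in your last paragraph), whereas you build the $(3,0)$-form first and read off $J$ from its factors.
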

\begin{proof}  Fix $k\in \ZZ$. We make use of the function $f:D^2_\epsilon\to \RR$ which equals $0$ for $\sqrt{x^2+y^2}\leq   \tfrac{ \epsilon }{3}$ and $  \tfrac{1}{ x^2+y^2 }$   for $\sqrt{x^2+y^2}\ge\tfrac{2\epsilon}{3}$. In terms of this function, define an almost complex structure 
$J_k$ acting on 1-forms on $D^2_\epsilon\times T^2\times T^2$ by
 \begin{equation*}\begin{split}
J_k(dx)&=-dz,\\ J_k(dz)&=dx,\\ J_k(dy)&= dw -kyf~dx + kxf~dy, \\
J_k(dw)&=-(1+k^2x^2f^2)~dy + k^2xyf^2~dx-k yf~dz-kxf~dw\\
J_k(d\sigma_1)&=-d\sigma_2, \\
J_k(d\sigma_2)&=d\sigma_1
\end{split}
\end{equation*}
It is routine to check that $J_k^2=-1$ and that  $J_k$ is compatible with the symplectic form
$$\omega_k=dxdz+dwdy+d\sigma_1d\sigma_2- kyf~dxdy=\omega- kyf~dxdy.
$$
Thus
\begin{eqnarray*}
s_k&=&(dx-iJ_k(dx))(dw-iJ_k(dw))(d\sigma_1-iJ_k(d\sigma_1))\\
&=&(dx+idz)(dw+idy)(d\sigma_1+id\sigma_2)+kf(xdxdy+i(ydxdz-xdydz))(d\sigma_1+id\sigma_2)
\end{eqnarray*}
 is a  section of $(3,0)$ forms. This is nowhere zero since the coefficient of $dxdw$ is 1, and hence pointwise spans the canonical bundle of $(D^2_\epsilon\times T\times T,J_k)$.

Using (\ref{forms}) one   calculates that over $(D^2_\epsilon\setminus D^2_{2\epsilon/3})\times T\times T$
\begin{equation}
\label{pullback}
\phi_k^*(J_0)=J_k, \phi_k^*(\omega)=\omega_k, ~\text{ and }~\phi^*(s_0)=s_k.
\end{equation}

Now $T^6$ is endowed with the symplectic form $\omega_X=dx_1dy_1+dx_2dy_2+dx_3dy_3$, compatible almost complex structure $J_X(dx_i)=-dy_i$, and nowhere zero section of its canonical bundle $s_X=(dx_1+idy_1)(dx_2+idy_2)(dx_3+idy_3)$.

For each $i=1,2,3,4$, $e_i^*(s_X)=s_0$, $e^*_i(J_X)=J_0$, and $e_i^*(s_X)=s_0$.  Using (\ref{pullback}) it follows that  when all the $\tau_i$ are the identity,  the almost complex structures $J_{k_i}$ and the sections $s_{k_i}$ over $D^2_\epsilon\times T\times T$
and the restrictions of $J_X$ and $s_X$ to the complement of $\sqcup_i e_i(D^2_{2\epsilon/3}\times T\times T))$ in $T^6$ patch together over $e_i((D^2_\epsilon\setminus D^2_{2\epsilon/3})\times T\times T)$ to give an almost complex structure $\tilde J$ compatible with $\tilde \omega$ and a nowhere zero section of the associated canonical bundle of $X_{k,\tau}$. 

For more general $\tau=(\tau_1,\tau_2,\tau_3,\tau_4)$, observe that the extension of $\tau_i$ 
 to a symplectomorphism $\tau_i:D^2_\epsilon\times T\times T\to D^2_\epsilon\times T\times T$ by the formula
\begin{equation}\label{taui}
\tau_i(x,y,e^{i z}, e^{iw},  \sigma_1,\sigma_2)=(x,y,e^{i (p_iz+q_iw)}, e^{i(r_iz+s_iw)},  \sigma_1,\sigma_2)
\end{equation}
induces a linear change of coordinate 1-forms $$ \tau_i^*(dz)=p_idz+r_idw,~ \tau_i^*(dw)= q_idz+s_idw$$ (and $\tau_i^*(dx)=dx,\tau_i^*(dy)=dy,\tau_i^*(d\sigma_1)=d\sigma_1,\tau_i^*(d\sigma_2)=d\sigma_1$).  The argument extends by replacing all occurences of  $dz$ and $ dw$  by $\tau_i^*(dz) $ and  $\tau_i^*(dw) $  in the definitions of $J_k, \omega_k$, and $s_k$. We leave the details to the reader. 

Thus the tangent bundle of $X_{k,\tau}$ admits an almost complex structure compatible with its symplectic form and a nowhere zero section of the associated canonical bundle of $(3,0)$ forms.  Thus $c_1(X_{k,\tau})=0$, as asserted.
\end{proof}

It is not necessarily true that the result of coisotropic Luttinger surgery along a 4-torus in a symplectic 6-manifold $X$ with $c_1(X)=0$ yields a manifold with vanishing first Chern class in general, see e.g. ~\cite{ADK}.  The important point in the preceding proof is that the non-vanishing section $s_X$ of the canonical bundle of $T^6$ over each 
$D^2_\epsilon\times T\times T$ coincides with the ``coordinate'' section $s_0=(dx+idz)(dw+idy)(d\sigma_1+id\sigma_2)$ via the embedding $e_i$. In the general case one may need to interpolate between the restriction of a given section of the canonical bundle of $X$ to the coordinate section over the neighborhood of the 4-torus.  This interpolation leads in general to the addition of ``rim'' 4-cycles, supported near the boundary of $D^2_\epsilon\times T\times T$,  to the divisor of the canonical  class of the surgered symplectic manifold.

\bigskip

 The following lemma will be used in the proof of Theorem \ref{product}.
 
 \begin{lem}\label{homology}  Let $N$ denote the union of the four tubular neighborhoods of $e_i(T^4)$ in $T^6$, where $e_i:T^4\to T^6$ are the embeddings of Equation (\ref{embeds}).  Then the inclusion 
 $$H_1(T^6 \setminus N)\to H_1(T^6)\cong\ZZ^6$$ is an isomorphism.  Moreover,   $H_2(T^6 \setminus N)\cong\ZZ^{17}$.
  \end{lem}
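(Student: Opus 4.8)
The plan is to compute the homology of $T^6 \setminus N$ by a Mayer--Vietoris argument, treating $N$ (a disjoint union of four copies of $D^2 \times T^4$) and its complement as the two pieces covering $T^6$. First I would record the ingredients: each tubular neighborhood $N_i = e_i(D^2_\epsilon \times T^4)$ deformation retracts onto $e_i(T^4)$, so $H_*(N) \cong \bigoplus_{i=1}^4 H_*(T^4)$; the intersection $N \cap (T^6 \setminus N)$ is homotopy equivalent to the disjoint union of four copies of the boundary $S^1 \times T^4$ of a tubular neighborhood; and $H_*(T^6)$ is the exterior algebra on $\langle x_1,\dots,x_6\rangle$ with the standard Künneth ranks $\binom{6}{k}$. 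I would set $A = $ (a slight thickening of) $T^6 \setminus N$ and $B = N$, with $A \cup B = T^6$ and $A \cap B \simeq \bigsqcup_{i=1}^4 (S^1 \times T^4)$.

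For the $H_1$ statement, the cleanest route is to exploit that $N$ has codimension two. The composite $H_1(T^6\setminus N)\to H_1(T^6)$ is surjective because the six generating circles of $T^6$ can be pushed off the codimension-two submanifold $N$, so each $x_j$ lifts into the complement. For injectivity I would use the relevant portion of the Mayer--Vietoris sequence
\begin{equation*}
H_2(T^6)\to \bigoplus_{i=1}^4 H_1(S^1\times T^4)\to H_1(T^6\setminus N)\oplus H_1(N)\to H_1(T^6)\to 0,
\end{equation*}
or equivalently a direct geometric argument: the kernel of $H_1(T^6\setminus N)\to H_1(T^6)$ is generated by the boundary meridian circles $\mu_i = \partial D^2_\epsilon \times \{pt\}$ of the four normal disk bundles, and each $\mu_i$ bounds a meridian disk that, although it meets $N_i$, can be seen to be null-homologous in the complement because the meridian of a torus in a torus bounds. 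I would verify that each $\mu_i$ is already null-homologous in $T^6\setminus N$ by Alexander/linking-number considerations, which forces the map to be injective and hence an isomorphism.

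For the computation $H_2(T^6\setminus N)\cong \ZZ^{17}$, I would run the full Mayer--Vietoris sequence in degree two. The key terms are $H_2(T^6)\cong \ZZ^{15}$, $H_2(N)\cong \bigoplus_{i=1}^4 H_2(T^4)\cong \ZZ^{24}$ (since $\dim H_2(T^4)=6$), and $H_2(A\cap B)\cong \bigoplus_{i=1}^4 H_2(S^1\times T^4)$. By Künneth, $H_*(S^1\times T^4)$ has Betti numbers $1,5,10,10,5,1$, so $\dim H_2(S^1\times T^4)=10$ and $\dim H_1(S^1\times T^4)=5$, giving $H_2(A\cap B)\cong \ZZ^{40}$ and $H_1(A\cap B)\cong \ZZ^{20}$. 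Assembling the exact sequence
\begin{equation*}
H_3(T^6)\to H_2(A\cap B)\to H_2(A)\oplus H_2(N)\to H_2(T^6)\to H_1(A\cap B)\to \cdots
\end{equation*}
and tracking the ranks of the boundary maps will pin down $\dim H_2(T^6\setminus N)$. The main obstacle is precisely this rank bookkeeping: the maps $H_2(S^1\times T^4)\to H_2(T^6)$ and $H_2(S^1\times T^4)\to H_2(N_i)$ are induced by the specific embeddings $e_i$ of Equation (\ref{embeds}), whose images in homology must be computed explicitly (the $e_i$ send the generating classes of the boundary $S^1\times T^4$ to prescribed products of the $x_j$), and I must determine the image and kernel of each connecting homomorphism rather than just read off ranks abstractly. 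Once these four embedding-induced maps are understood on $H_1$ and $H_2$, the count $17$ should drop out of the alternating-sum (Euler-characteristic) constraint together with the injectivity established in the $H_1$ step.
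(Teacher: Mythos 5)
Your framework is essentially the paper's: the Mayer--Vietoris sequence for $T^6=(T^6\setminus N)\cup N$ carries the same information as the long exact sequence of the pair $(T^6,T^6\setminus N)$ that the paper uses, via the excision isomorphism $H_n(T^6,T^6\setminus N)\cong\oplus_{i=1}^4 H_{n-2}(e_i(T^4))$. Your $H_1$ argument is correct and matches the paper's: surjectivity because $N$ has codimension two, injectivity because each meridian $\mu_i$ dies in the complement. Be aware, though, that ``the meridian of a torus in a torus bounds'' is not a general principle; what makes it true here is the existence, for each $i$, of a dual coordinate 2-torus (e.g.\ $T_1=S^1\times\{-1\}\times\{-1\}\times S^1\times\{-1\}\times\{-1\}$ for $e_1$) meeting $e_i(T^4)$ transversely in a single point and missing the other three 4-tori, so that the punctured dual torus is a 2-chain in $T^6\setminus N$ with boundary $\mu_i$. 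This is exactly the paper's argument, and your ``linking-number considerations'' need to be cashed out in this form.

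The genuine gap is in the $H_2$ computation. You defer the decisive step and then assert that the count $17$ ``should drop out of the alternating-sum (Euler-characteristic) constraint together with the injectivity established in the $H_1$ step.'' This cannot work: the alternating-sum constraint on the Mayer--Vietoris sequence only recovers $\chi(T^6\setminus N)=\chi(T^6)-\chi(N)+\chi(\partial N)=0$, a single equation that also involves the unknown Betti numbers $b_3,b_4,b_5$ of the complement, so it cannot isolate $b_2$. What is actually needed is the rank of the degree-three connecting homomorphism $\partial_3\colon H_3(T^6)\to H_2(A\cap B)$ (in the paper's formulation, of $H_3(T^6)\to H_3(T^6,T^6\setminus N)\cong\oplus_{i=1}^4H_1(e_i(T^4))\cong\ZZ^{16}$), \emph{and} the fact that its image is a direct summand, which is what guarantees that $H_2(T^6\setminus N)$ is free, i.e.\ $\cong\ZZ^{17}$ rather than $\ZZ^{17}\oplus(\text{torsion})$. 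The paper establishes this by an explicit intersection-theoretic argument that your plan would have to reproduce: the ten coordinate 3-tori not involving the first circle factor push off $N$, so the rank is at most $20-10=10$; the remaining ten coordinate 3-tori $W_1,\dots,W_{10}$ meet the $e_i(T^4)$ transversely in homologically essential circles, and the resulting ten classes in $\ZZ^{16}$ are linearly independent and span a summand, so the rank is exactly $10$ with free cokernel. Once $\operatorname{rank}\partial_3=10$ is known, your own bookkeeping closes, since exactness gives $b_2(T^6\setminus N)=\bigl(40-10\bigr)+\bigl(15-4\bigr)-24=17$, where $4$ is the rank of $\partial_2\colon H_2(T^6)\to H_1(A\cap B)$ forced by the $H_1$ portion of the sequence. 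Without this geometric input your proposal proves the first assertion of the lemma but not the second.
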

  
  \begin{proof} Thicken  the embeddings $e_i:T^4\to T^6$ to (closed) tubular neighborhoods $e_i:D^2_\epsilon\times T^4\to T^6$ and  denote the  union of these  tubular neighborhoods by $N$. 
The  excision and Kunneth theorems  give isomorphisms
$$\oplus_{i=1}^4 H_{n-2}(e_i(T^4))\cong H_{n-2} (\sqcup_i e_i(T^4))\otimes H_2(D^2,S^1)  \cong H_n(N,\partial N) \cong H_n(T^6, T^6 \setminus N).$$ 
This correspondence assigns to an $(n-2)$-cycle $\gamma\subset e_i(T^4) $ the  product $ \gamma\times (D^2,S^1)$.   The connecting homomorphism $H_n(T^6,T^6 \setminus N)\to H_{n-1}(T^6 \setminus N)$ takes $\gamma\times(D^2,S^1)$ to $\gamma\times S^1$.

 Consider the exact  sequence of the pair
 \begin{equation}\label{mv}
 \cdots\to H_{n+1}(T^6, T^6 \setminus N)\to H_n(T^6 \setminus N)\to H_n(T^6)\to H_n( T^6, T^6 \setminus N)\to\cdots
\end{equation}

Since $H_1(T^6,T^6 \setminus N)=0$,  $H_1(T^6 \setminus N)\to H_1(T^6)$ is surjective. The connecting homomorphism 
 $\ZZ^4\cong H_2(T^6, T^6 \setminus N)\to H_1(T^6 \setminus N)$ has image generated by the four meridians 
 $\mu_i= e_i(p)\times S^1$.    
 
 But  $\mu_i=0\in H_1(T^6 \setminus N)$ since they bound the punctured dual tori.   More explicitly,   $T_1=S^1\times\{-1\}\times \{-1\} \times S^1\times\{-1\}\times \{-1\}$ is a 2-dimensional torus in $T^6$ which meets $e_1(T^4)$ in the meridian disk $e_1(D^2_\epsilon\times\{-1\}\times\{-1\})$ and is disjoint from $e_i(T^4)$ for $i=2,3,4$.  Thus $T_1-e_1(D^2_\epsilon\times\{-1\}\times\{-1\})$ is a 2 chain in $T^6 \setminus e_1(T^4)$ with boundary $\mu_1$.  Similar arguments show that all the $\mu_i$ are zero.   Hence $H_1(T^6 \setminus N)\to H_1(T^6)\cong\ZZ^6$ is an isomorphism.

We claim the homomorphism 
$$H_3(T^6)\to H_3(T^6,T^6 \setminus N)\cong  \oplus_{i=1}^4H_1( e_i(T^4))   \cong    \ZZ^{16}$$ 
has rank  10.  

First note that it has rank at most 10, since $H_3(T^6)\cong \ZZ^{20}$, and the ${{6-1}\choose{3}}=10$
coordinate 3-tori with first coordinate fixed lift to $H_3(T\setminus N)$: just choose their first coordinate distinct from $\pm 1,\pm i$.

Denote by $W_i$, $i=1,2,\cdots ,10$  the following representatives of the remaining ten coordinate 3-tori in $T^6$:
\begin{equation*}
\begin{split}
W_1=\{(e^{ia}, e^{ib},-1,  e^{ic}, -1, -1)~|~ a,b,c\in \RR\}\\
W_2=\{(e^{ia}, e^{ib}, e^{ic}, -1, -1, -1)~|~ a,b,c\in \RR\}\\
W_3=\{(e^{ia}, e^{ib},-1, -1,  -1, e^{ic})~|~ a,b,c\in \RR\}\\
W_4=\{(e^{ia}, e^{ib},-1, -1,  e^{ic}, -1)~|~ a,b,c\in \RR\}\\
W_5=\{(e^{ia}, -1, e^{ib}, -1,  -1, e^{ic})~|~ a,b,c\in \RR\}\\
W_6=\{(e^{ia}, -1, e^{ib},-1,   e^{ic}, -1)~|~ a,b,c\in \RR\}\\
W_7=\{(e^{ia}, -1, e^{ib},   e^{ic} ,-1, -1)~|~ a,b,c\in \RR\}\\
W_8=\{(e^{ia}, -1, -1, e^{ib}, e^{ic}-1)~|~ a,b,c\in \RR\}\\
W_9=\{(e^{ia}, -1, -1, e^{ib},-1,   e^{ic})~|~ a,b,c\in \RR\}\\
W_{10}=\{(e^{ia}, -1, -1,-1,e^{ib}, e^{ic})~|~ a,b,c\in \RR\}\\
\end{split}
\end{equation*}
These generate a rank 10 free abelian subgroup of $H_3(T^6)$, and intersect each of the $e_i(T^4)$ transversely.  Thus the image of each of these ten 3-cycles in $H_3(T^6,T^6 \setminus N)\cong\oplus_{i=1}^4H_1(e_i(T^4))$ is determined by taking its (transverse) intersection with $e_i(T^4)$.

For example,  $W_1$ misses $e_j(T^4)$ for $j\ne 1$ and intersects $e_1(T^4)$ transversely in the  homologically essential circle $ \{1\}\times S^1\times  (-1,1,-1,-1) =e_1(S^1\times (-1,-1,-1) )$. Similarly, for $1\leq i,j \leq 4$, $W_j$ misses $e_i(T^4)$ when $i\ne j$ and intersects $e_i(T^4)$ in a homologically essential circle.

For $5\leq j\leq10$,   $W_j$ intersects exactly two of the $e_i(T^4)$. For example, $W_5 $ is disjoint from $e_1(T^4)$ and $e_4(T^4)$ and intersects $e_2(T^4)$ in the circle
$ (i,-1,1,-1,-1) \times S^1=e_2((-1,-1,-1)\times S^1)$, and intersects $e_3(T^4)$ in the circle
$(-1,-1)\times S^1\times(-1,-1,1)=e_3((-1,-1)\times S^1\times -1)$.

We leave the reader the straightforward check that the 10  cycles in $\oplus_{i=1}^4 H_1(e_i(T^4))\cong \ZZ^{16}$ are linearly independent and span a summand.  Thus the rank of $H_3(T^6)\to H^3(T^6, T^6\setminus N)$ is 10 and its cokernel is $\ZZ^6$.

 From the exact  sequence (\ref{mv}) with $n=2$ we obtain
 $$0\to \ZZ^{6}\to H_2(T^6 \setminus N)\to H_2(T^6)\to \ZZ^4\to 0.$$
Since $H_2(T^6)\cong \ZZ^{15}$ and $H_1(T^6\setminus N)$ is free abelian, we conclude that   $H_2(T^6 \setminus N)\cong\ZZ^{17}$. \end{proof}

\bigskip

Lemma \ref{homology} says that $H_1(T^6 \setminus N)\to H_1(T^6)$ is an isomorphism and hence the six coordinate circles freely generate  $H_1(T^6 \setminus N)\cong\ZZ^6$.  Label these generators $x_1,\cdots, x_6$.  As explicit curves in $T^6 \setminus N$, one can take $x_1=S^1\times (p,p,p,p,p), $  $x_2= p \times S^1\times (p,p,p,p)$, etc., where $p$ is a primitive eight root of unity.

\begin{prop} \label{rels} Given $(k,\tau)=(k_1, \tau_1; k_2,\tau_2;k_3,\tau_3;k_4,\tau_4)\in \ZZ^4\times (SL(2,\ZZ))^4$, with   $$\tau_i=\begin{pmatrix} p_i&q_i\\ r_i & s_i\end{pmatrix}, $$  then 
$H_1(X_{k,\tau})$ is the quotient of $\ZZ^6=H_1(T^6 \setminus N)=\langle x_1,x_2,x_3,x_4,x_5,x_6\rangle$ by the subgroup generated by 
$$k_1(q_1x_2+s_1x_3), k_2(q_2x_2+s_2x_4), k_3(q_3x_2+s_3x_5),k_4(q_4x_2+s_4x_6).$$

Moreover, the Betti numbers of $X_{k,\tau}$ satisfy
 $$b_2(X_{k,\tau})\leq 15+b_1(X_{k,\tau})~\text{and}~
   b_3(X_{k,\tau})\leq 32.$$

\end{prop}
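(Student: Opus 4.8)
The plan is to treat the two assertions separately: first the description of $H_1(X_{k,\tau})$, then the Betti bounds via Mayer--Vietoris together with Poincar\'e duality.

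For $H_1$ I would invoke the fundamental-group computation of the earlier proposition and abelianize. That proposition identifies $\pi_1(X_{k,\tau})$ as $\pi_1(T^6\setminus N)$ modulo the normal closures of the reglued meridians $\psi_{k_i}(\partial D^2_\epsilon\times\{\mathrm{pt}\})$, so $H_1(X_{k,\tau})$ is $H_1(T^6\setminus N)$ modulo the subgroup generated by their homology classes. By Lemma~\ref{homology}, $H_1(T^6\setminus N)\cong\ZZ^6=\langle x_1,\dots,x_6\rangle$ and each meridian $\mu_i$ vanishes there. From the definition of $\psi_{k_i}$ in (\ref{psi}), the class of $\psi_{k_i}(\partial D^2_\epsilon\times\{\mathrm{pt}\})$ equals $\mu_i+k_i\overline w_i$, where $\overline w_i$ is the image in $H_1$ of the twisted $w$-circle; reading the embeddings (\ref{e1}) after precomposition with $\tau_i$ as in (\ref{taui}) shows $\overline w_i=q_ix_2+s_ix_{2+i}$. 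Since $\mu_i=0$, the relations become exactly $k_i(q_ix_2+s_ix_{2+i})$, as claimed.

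For the Betti bounds I would write $X_{k,\tau}=A\cup B$ with $A=T^6\setminus N$, $B=\sqcup_{i=1}^4 D^2_\epsilon\times T^4$, and $A\cap B\simeq\sqcup_{i=1}^4 T^5$, the regluing being by $\psi_{k_i}$ on each boundary component. Standard torus and Lemma~\ref{homology} computations give, over $\QQ$, $\dim H_2(A)=17$, $\dim H_2(B)=24$, $\dim H_1(A)=6$, $\dim H_1(B)=16$, and $\dim H_1(A\cap B)=20$. The Mayer--Vietoris sequence yields a short exact sequence $0\to\operatorname{coker}\alpha\to H_2(X_{k,\tau})\to\ker\beta\to 0$, where $\alpha\colon H_2(A\cap B)\to H_2(A)\oplus H_2(B)$ and $\beta\colon H_1(A\cap B)\to H_1(A)\oplus H_1(B)$ are the difference-of-inclusions maps, the twist $\psi_{k_i}$ entering the $B$-side inclusion. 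Since the $H_0$-map is injective, $H_1(X_{k,\tau})=\operatorname{coker}\beta$, whence $\operatorname{rank}\beta=22-b_1$ and $\dim\ker\beta=b_1-2$. Counting ranks then gives $b_2=\dim\operatorname{coker}\alpha+\dim\ker\beta=(41-\operatorname{rank}\alpha)+(b_1-2)=39+b_1-\operatorname{rank}\alpha$, so the desired bound $b_2\le 15+b_1$ is equivalent to $\operatorname{rank}\alpha\ge 24$.

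The heart of the argument is this rank estimate, and the key step I would isolate is the surjectivity of the $B$-component of $\alpha$. For each $i$ the inclusion $T^5=\partial(D^2_\epsilon\times T^4)\hookrightarrow D^2_\epsilon\times T^4\simeq T^4$ collapses the meridian circle, inducing a surjection $H_2(T^5)\to H_2(T^4)$ of rank $6$; precomposing with $(\psi_{k_i})_*$, an isomorphism because $\psi_{k_i}$ is a diffeomorphism, leaves the rank unchanged. Summing over the four components, the projection of $\operatorname{im}\alpha$ to $H_2(B)\cong\ZZ^{24}$ is onto, so $\operatorname{rank}\alpha\ge 24$ for every $(k,\tau)$, giving $b_2\le 15+b_1$. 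Finally, since $X_{k,\tau}$ is a closed orientable (indeed symplectic) $6$-manifold with $\chi(X_{k,\tau})=\chi(T^6)=0$ by the Euler-characteristic proposition, Poincar\'e duality gives $b_3=2(1-b_1+b_2)$, whence $b_3\le 2(1-b_1+15+b_1)=32$. The main obstacle is purely bookkeeping: correctly tracking the regluing diffeomorphism $\psi_{k_i}$ through the boundary maps of the Mayer--Vietoris sequence and confirming that the twist affects neither the surjectivity onto $H_2(B)$ nor the relevant dimension counts, so that $\operatorname{rank}\alpha\ge 24$ holds uniformly in $(k,\tau)$.
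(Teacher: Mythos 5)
Your proposal is correct, and it splits naturally into a part that coincides with the paper and a part that is genuinely different. The $H_1$ computation is essentially the paper's: the paper turns the handle decomposition of $D^2\times T^4$ upside down to see that $X_{k,\tau}$ is built from $T^6\setminus N$ by attaching four $2$-handles (and then handles of index $\geq 3$, which cannot affect $H_1$), so that $H_1(X_{k,\tau})$ is $H_1(T^6\setminus N)=\ZZ^6$ modulo the classes of the attaching circles $e_i\circ\tau_i\circ\psi_{k_i}(\partial D^2_\epsilon)$; your route through the $\pi_1$ proposition plus abelianization is the same argument (that proposition was itself proved by this handle decomposition), and your identification of the relation classes as $\mu_i+k_i\overline w_i=k_i(q_ix_2+s_ix_{2+i})$, using $\mu_i=0$ from Lemma~\ref{homology}, matches Equation~(\ref{relations}) exactly.

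Where you genuinely diverge is the bound $b_2\leq 15+b_1$. The paper stays with the handle decomposition: attaching a $2$-handle raises $b_2$ if and only if its attaching circle is torsion in $H_1$, so after the four $2$-handles one has second Betti number $17+(4-d)=21-d$ where $b_1=6-d$, and the handles of index $\geq 3$ can only decrease (never increase) $b_2$. You instead run Mayer--Vietoris with $A=T^6\setminus N$, $B=\sqcup_4 D^2\times T^4$, $A\cap B\simeq\sqcup_4 T^5$, reduce everything to the single estimate $\operatorname{rank}\alpha\geq 24$, and prove that estimate by observing that $H_2(T^5)\to H_2(D^2\times T^4)$ is onto and that the regluing $\psi_{k_i}$ only enters through an isomorphism of $H_*(T^5)$, so the bound is uniform in $(k,\tau)$. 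Both arguments rest on Lemma~\ref{homology} and both finish $b_3\leq 32$ identically from $\chi=0$ and Poincar\'e duality. The paper's count is shorter and makes the dependence on $d$ transparent; your version is more systematic and gives the exact formula $b_2=39+b_1-\operatorname{rank}\alpha$, so it could in principle be sharpened to an exact computation of $b_2$ by determining $\operatorname{rank}\alpha$, something the handle count does not directly offer. One small point to make explicit if you write this up: the $\pi_1$ proposition is stated for a single surgery, so you should note that it applies verbatim to the four disjoint surgeries performed here.
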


\begin{proof}  
The manifold $D^2\times T^4$ has its usual handle decomposition with one 0-handle, four 1-handles, six 2-handles, four 3-handles, and one 4-handle. Turning it upside down one obtains the dual handle decomposition, showing that $D^2\times T^4$ is obtained from $\partial (D^2)\times T^4\times [0,1]$ by adding one 2-handle along $\mu=\partial (D^2)\times \{p\}\times\{1\}$, then adding four 3-handles, six 4-handles, four 5-handles, and one 6-handle.

Hence $H_1(X_{k,\tau})$ is the quotient of $H_1(T^6 \setminus N)$ by the subgroup generated by the four 
circles   along which the 2-handles are reattached in passing from $T^6\setminus N$ to $X_{k,\tau}$.

From the formulas  (\ref{psi}), (\ref{taui}), and (\ref{embeds}) one sees that
the   2-handle corresponding to $e_i$ is attached along $e_i\circ\tau_i \circ \psi_{k_i}(\partial D^2_\epsilon)$. In terms of the generators $x_1\cdots, x_6$ of $H_1(T^6 \setminus N)$, a simple calculation shows that 
 \begin{equation}\label{relations} \begin{split}
[e_1(\tau_1(\psi_{k_1}(\partial D^2_\epsilon)))]=k_1(q_1x_2+s_1x_3)\\
[e_2(\tau_2(\psi_{k_2}(\partial D^2_\epsilon)))]=k_2(q_2x_2+s_2x_4)\\
[e_3(\tau_3(\psi_{k_3}(\partial D^2_\epsilon)))]=k_3(q_3x_2+s_3x_5)\\
[e_4(\tau_4(\psi_{k_4}(\partial D^2_\epsilon)))]=k_4(q_4x_2+s_4x_6)
 \end{split}
\end{equation}

 Thus $H_1(X_{k,\tau})$ is isomorphic to the quotient of $\ZZ^6$ by the four 1-cycles on the right side of Equation (\ref{relations}).
 
 Lemma \ref{homology} shows that $H_2(T^6\setminus N)\cong \ZZ^{17}$. Attaching a 2-handle to a manifold increases the second Betti number if and only if the attaching circle has finite order in  first homology. Hence  if $H_1(X_{k,\tau})\cong\ZZ^{6-d}\oplus F$ for a finite abelian group $F$, 
 the second Betti number of $T^6\setminus N$ with the  four 2-handles attached is $17+(4-d)=21-d$.  Attaching the sixteen 3-handles decreases the second Betti number further, and the 4-handles, 5-handles, and 6-handles do not change the second Betti number.  Hence
 $b_2(X_{k,\tau})\leq 21-d= 15+ b_1(X_{k,\tau})$.  The Euler characteristic of $X_{k,\tau}$ equals zero, and so
 $$0=2-2b_1(X_{k,\tau})+2b_2(X_{k,\tau})-b_3(X_{k,\tau})\leq 32-b_3(X_{k,\tau}).$$
Therefore $b_3(X_{k,\tau})\leq 32$. 
\end{proof}

 By choosing the $\tau_i$ and $k_i$ appropriately, one can ensure that $H_1(X_{k,\tau})$ is    isomorphic to  
 \begin{equation}\label{H1}
\ZZ^2\oplus \ZZ/d_1\oplus \ZZ/d_2\oplus \ZZ/d_3\oplus \ZZ/d_4 
\end{equation}
 for any  4-tuple of non-negative integers $d_i$ (e.g.~ take $\tau_i=$Id and $k_i=d_i $).
  In particular, when an odd number of the $d_i$ are non-zero,  then  the first Betti number is is odd and hence $X_{k, \tau}$ cannot be K\"ahler.
  
\bigskip
 
To ensure that our construction produces new manifolds, we have the following. 
 \begin{thm} \label{product}  If $H_1(X_{k,\tau})$ has rank  2 or 3, then $X_{k,\tau}$ is not symplectomorphic to the product of any symplectic 4-manifold with a surface.
 \end{thm}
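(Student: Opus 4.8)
The plan is to argue by contradiction, playing the a priori bound $b_2(X_{k,\tau})\le 15+b_1(X_{k,\tau})$ from Proposition~\ref{rels} against the integral vanishing of the first Chern class supplied by Theorem~\ref{chern}. Suppose $X:=X_{k,\tau}$ were symplectomorphic to a product $M\times F$, with $(M,\omega_M)$ a closed symplectic $4$-manifold and $F$ a closed surface, and that $b_1(X)\in\{2,3\}$. First I would restrict the identity $c_1(X)=0$ to the slices $M\times\{p\}$ and $\{q\}\times F$. Since $T(M\times F)$ is the direct sum of the pullbacks of $TM$ and $TF$, these restrictions are $c_1(M)$ and $c_1(F)$, so both vanish in integral cohomology. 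Because $c_1(F)$ equals $\chi(F)$ times a generator of $H^2(F;\ZZ)$, its vanishing forces $\chi(F)=0$, i.e.\ $F=T^2$. Thus $X\cong M\times T^2$, and the K\"unneth formula gives $b_1(M)=b_1(X)-2\in\{0,1\}$.

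The heart of the argument is a lower bound on $b_2(M)$. Since $c_1(M)=0$ integrally, $w_2(M)\equiv c_1(M)\equiv 0\pmod 2$, so $M$ is spin and Rokhlin's theorem gives $\sigma(M)\equiv 0\pmod{16}$. The almost complex relation $c_1(M)^2=2\chi(M)+3\sigma(M)$ reduces to $2\chi(M)+3\sigma(M)=0$, while $[\omega_M]^2>0$ gives $b_+(M)\ge 1$. Writing $\chi$ and $\sigma$ in terms of $b_1,b_+,b_-$ and eliminating, I would find $b_2(M)=4-4b_1(M)+6\,b_+(M)$ together with the congruence $b_+(M)\equiv b_1(M)-1\pmod 4$. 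Combined with $b_+(M)\ge 1$, this congruence forces $b_+(M)\ge 3$ when $b_1(M)=0$ and $b_+(M)\ge 4$ when $b_1(M)=1$, hence $b_2(M)\ge 22$ and $b_2(M)\ge 24$, respectively.

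To conclude, apply K\"unneth once more: $b_2(X)=b_2(M)+2b_1(M)+1$. If $b_1(M)=0$ this gives $b_2(X)\ge 23$, against the bound $b_2(X)\le 15+b_1(X)=17$ from Proposition~\ref{rels}; if $b_1(M)=1$ it gives $b_2(X)\ge 27$, against $b_2(X)\le 18$. Either way we reach a contradiction. The step I expect to be the main obstacle is the lower bound $b_2(M)\ge 22$: the soft inputs $2\chi+3\sigma=0$ and $b_+\ge 1$ by themselves only yield $b_2(M)\ge 10$, a value realized by the Enriques surface, which is too weak to beat Proposition~\ref{rels}. It is essential that $c_1(X)=0$ holds \emph{integrally} (Theorem~\ref{chern} produces a nowhere-zero section of the canonical bundle), so that $M$ is spin and Rokhlin's congruence pushes $b_+\ge 3$; this is exactly what rules out the Enriques-type low-$b_2$ behaviour. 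Alternatively, one could invoke the classification of symplectic Calabi--Yau surfaces (Li, Bauer), by which no such $M$ has $b_1=1$, and those with $b_1=0$ have the integral homology of a $K3$ surface once Enriques is excluded by integral triviality of the canonical class.
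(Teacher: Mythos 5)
Your proof is correct and follows essentially the same route as the paper's: assume $X\cong M\times F$, split $c_1$ to get $c_1(M)=0$ and $F=T^2$, use spin plus Rokhlin together with $c_1(M)^2=2\chi(M)+3\sigma(M)=0$ and $b^+(M)\ge 1$ to force $b_2(M)\ge 22$, and contradict the bound $b_2(X)\le 15+b_1(X)$ via K\"unneth. The only differences are presentational (restricting $c_1$ to slices rather than invoking K\"unneth injectivity, and phrasing Rokhlin as the congruence $b^+\equiv b_1-1\pmod 4$ instead of writing $b^-=b^++16n$), so there is nothing substantive to change.
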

 
 \begin{proof}
Choose an $X= X_{k,\tau}$ such that the first Betti number of $X$ satisfies $b_1(X)= 2+r$ for $r=0$ or $1$.    Lemma \ref{homology} shows that $b_2(X)\leq 18$.

 Suppose that $X$ were  symplectomorphic to $M\times F$, for some symplectic 4-manifold $M$ and closed oriented surface $F$.  Then $0=c_1(X)=\pi_1^*(c_1(M))+\pi_2^*(c_1(F))$, where $\pi_i$ denote the projections of $M\times F$ to its two factors.  The Kunneth theorem shows that $\pi_1^*+ \pi_2^*:H^2(M)\oplus H^2(F)\to H^2(M\times F)$ is injective, and hence $c_1(M)=0$ and $c_1(F)=0$.  Thus $F$ is a torus, $F=T^2$, and $M$ admits a Spin structure.

  Rohlin's theorem then shows that
 the signature $\sigma(M)=b^+(M)-b^-(M)$ is a multiple of 16, and so 
 $$b^-(M)=b^+(M)+16n$$ for some integer $n$.  The Kunneth theorem implies that $b_1(M)=r$, and so the Euler characteristic 
 is given by $e(M)=2-2r+b_2(M)= 2-2r+b^+(M)+ b^-(M)$.  Since  $0=c_1(M)^2=2e(M)+3\sigma(M)=4-4r+ 5b^+(M)-b^-(M),$   $$b^-(M)=5b^+(M)+4-4r$$ and we conclude that 
 $$b^+(M)=4n+r-1.$$ 
 
 The symplectic form satisfies $\omega_M^2>0$ and so $b^+(M)\ge 1$.  Since $r=0$ or $1$,   this implies that $n\ge 1$.  Hence
 $$b_2(M)=b^+(M)+b^-(M)= 6 b^+(M)+ 4 -4r=24n +2r-2\ge 22.$$
 Then 
 $$b_2(X)= b_2(M)+ b_1(M) b_1(T^2)+ b_2(T^2)   \ge 23.$$
   This contradicts the bound $b_2(X)\leq 18$ obtained above.
 
 \end{proof}
 
  Arguments like those given in the proof of Theorem  \ref{product} can be used to show that   $X_{k,\tau}$ is not homotopy equivalent to the product of a symplectic  4-manifold with a surface of genus 2 or more for any $(k, \tau)$. This leaves open the possibility that some $X_{k,\tau}$ is homotopy equivalent or even diffeomorphic  to $M\times S^2$.

  \bigskip

 We do not know if every $X_{k, \tau}$ with even first Betti number  admits a \Kahler structure, but conjecture that most do not.  The reason for this conjecture is that most $X_{k, \tau}$ are not likely to satisfy the Hard Lefschetz Theorem (cf.  \cite{B, BL}).

\section{Concluding remarks}

Coisotropic Luttinger surgery can be useful in other contexts. For example, an easy extension of Theorem \ref{result} can be obtained by considering surgeries on $2n$-dimensional tori.  An interesting setting occurs when a symplectic manifold fibers in several different ways. In our   examples  we applied this to  $T^6=T^2\times T^2\times T^2$ and its three coordinate fibrations to the 4-torus. One could also  start with a product of closed surfaces $X=\Sigma_{g_1}\times \Sigma_{g_2}\times \cdots\times \Sigma_{g_n}$, which contains many coisotropic submanifolds of the form $T^2\times Z$  obtained as   preimages of  Lagrangian tori with respect to various projections of  $X $  to $\Sigma_{g_i}\times \Sigma_{g_j}$.

One can produce symplectic $2n$-manifolds with a wide range of possible homology groups and canonical classes by this method.  Deriving more concrete homotopy or diffeomorphism information is more difficult, as establishing control of the fundamental group is always a challenge in codimension two surgery constructions, c.f.~\cite{BK2}.

Another promising direction is to apply the method to  Lefschetz  fibrations. For example, the $K3$ surface, as a desingularization of $T^4/\ZZ/2$,  has  admits different elliptic fibrations. Thus $K3\times T^2$ contains   submanifolds on which one can perform coisotropic Luttinger surgery. This should lead to examples with smaller first homology and perhaps even to simply connected examples.




\end{document}